\newcommand{\bp}{\mathbf p}
\newcommand{\bq}{\mathbf q}
\newcommand{\rat}{\dashrightarrow}
\renewcommand{\P}{\mathbb P}
\newcommand{\PGL}{\text{PGL}}
\newcommand{\C}{\mathbb C}
\newcommand{\cstar}{($\ast$)}
\newcommand{\git}{/\!\!/}
\newtheorem{theorem}{Theorem}
\newtheorem*{theorem*}{Theorem}
\newtheorem{lemma}[theorem]{Lemma}
\newtheorem*{conjecture*}{Conjecture}
\newtheorem*{question*}{Question}
\theoremstyle{definition}
\newtheorem*{remarks}{Remarks}
\DeclareMathOperator{\Pico}{Pic^0}
\newcommand{\Coh}{\text{Coh}}
\DeclareMathOperator{\Cr}{Cr}
\newcommand{\cO}{\mathcal O}
\newcommand{\D}{D}
\begin{document}

\title{Derived-equivalent rational threefolds}
\author{John Lesieutre} 
\address{Department of Mathematics\\
  MIT\\
  77 Massachusetts Avenue\\
  Cambridge, MA 02139, USA} \email{johnl@math.mit.edu}

\thanks{The author was supported by an NSF Graduate Research
  Fellowship under Grant \#1122374.}

\begin{abstract}
  We describe an infinite set of smooth projective threefolds that
  have equivalent derived categories but are not isomorphic, contrary
  to a conjecture of Kawamata.  These arise as blow-ups of \(\P^3\) at
  various configurations of \(8\) points, which are related by Cremona
  transformations.
\end{abstract}

\maketitle

\section{Introduction}

For a smooth projective variety \(X\), let \(\D(X) = D^b \Coh(X)\)
denote the bounded derived category of coherent sheaves on \(X\).  The
derived category contains a great deal of information about the
variety \(X\): if \(Y\) is another variety whose derived category is
equivalent to that of \(X\), then \(X\) and \(Y\) have the same
dimension and Kodaira dimension, and, if \(X\) is of general type,
they are are birational~\cite{kawamataequiv}.  Extending a result of
Bridgeland and Maciocia~\cite{bridgelandmaciocia}, Kawamata proved
that if \(X\) is a smooth projective surface, there are only finitely
many other smooth projective surfaces \(Y\) (up to isomorphism), with
\(D(X)\) and \(D(Y)\) equivalent as triangulated categories. He asked
whether this property might hold in all dimensions~\cite[Conjecture
1.5]{kawamataequiv}.

We observe here that there are threefolds for which this is not the
case.  Let \(\bp\) denote an ordered \(8\)-tuple of distinct points in
\(\P^3\), and let \(X_\bp\) be the blow-up of \(\P^3\) at the points
of \(\bp\).
\begin{theorem*}
  There is an infinite set \(W\) of configurations of \(8\) points in
  \(\P^3\) such that if \(\bp\) and \(\bq\) are distinct elements of
  \(W\), then \(D(X_\bp) \cong D(X_\bq)\) but \(X_\bp\) and \(X_\bq\)
  are not isomorphic.
\end{theorem*}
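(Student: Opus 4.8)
The plan is to obtain $W$ as (an infinite subset of) a single orbit of an infinite group of Cremona transformations acting on configurations of eight general points, to recognize the resulting birational maps between the blow-ups as compositions of flops --- hence derived equivalences --- and finally to recover from $X_\bp$ enough of its geometry, essentially the eight points themselves, to conclude that infinitely many of the $X_\bp$ are pairwise non-isomorphic.

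\emph{The Cremona action.} For $\bp$ very general the eight points lie on a unique smooth quartic elliptic curve $E_\bp\subset\P^3$, the base locus of the pencil of quadrics through them. For each four-element subset $I\subset\{1,\dots,8\}$ one may apply the standard cubic Cremona transformation $\Cr_I$ of $\P^3$ centred at the four points indexed by $I$, carrying $\bp$ to a new configuration $\Cr_I(\bp)$ of eight general points. Together with the relabellings in $S_8$ these generate a group $G$, and one checks that the induced action on $\operatorname{Pic}(X_\bp)=\mathbb Z H\oplus\bigoplus_{i=1}^{8}\mathbb Z E_i$ (which fixes $K=-4H+2\sum E_i$) is the action of the Weyl group of the graph $T_{2,4,4}$, i.e.\ the affine Weyl group of type $E_7$; in particular $G$ is infinite. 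Writing $\bar\ell,\bar p_1,\dots,\bar p_8\in E_\bp$ for the classes, relative to a chosen origin, of the hyperplane section and of the eight marked points $E_\bp\cap E_i$, the transformation $\Cr_I$ acts \emph{linearly} on $E_\bp^{\,9}$: it fixes $\bar p_i$ for $i\notin I$, sends $\bar p_i\mapsto\bar p_i+\bar\ell-\sum_{m\in I}\bar p_m$ for $i\in I$, and sends $\bar\ell\mapsto 3\bar\ell-2\sum_{m\in I}\bar p_m$. Thus $G$ acts on $E_\bp^{\,9}$ through a faithful representation by integer matrices, and since a nontrivial integer matrix acts on $E_\bp^{\,9}$ with proper fixed locus, for very general $\bp$ the orbit $W:=G\cdot\bp$ is infinite; moreover this survives passage to $E_\bp^{\,9}$ modulo the finitely many obvious symmetries (translations on $E_\bp$, the elliptic involution, and $S_8$), so the isomorphism class of the triple $\bigl(E_\bp,\,\cO_{E_\bp}(1),\,\{p_1,\dots,p_8\}\bigr)$ --- equivalently the projective-equivalence class of $\bp$ --- already takes infinitely many values over $W$.

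\emph{Flops and derived equivalence.} For $\bq=\Cr_I(\bp)$, the birational map $X_\bp\rat X_\bq$ induced by $\Cr_I$ is an isomorphism in codimension one: it is the composition of the flops of $X_\bp$ along the six pairwise disjoint smooth rational curves $\widetilde L_{ij}$ $(i,j\in I)$, the strict transforms of the lines $\overline{p_ip_j}$, which are disjoint from the strict transform of $E_\bp$. An adjunction computation gives $K_{X_\bp}\cdot\widetilde L_{ij}=0$, and for general $\bp$ one finds $N_{\widetilde L_{ij}/X_\bp}\cong\cO(-1)^{\oplus 2}$, so each is an ordinary flopping curve. Hence any two members of $\{X_\bp:\bp\in W\}$ are related by a finite chain of flops of smooth projective threefolds, and by the flop theorem (Bondal--Orlov for the standard flop, Bridgeland in general) each such flop induces an equivalence of bounded derived categories; therefore $\D(X_\bp)\cong\D(X_\bq)$ for all $\bp,\bq\in W$.

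\emph{Non-isomorphism, and the main obstacle.} It remains to see that infinitely many of the $X_\bp$ $(\bp\in W)$ are pairwise non-isomorphic; one then renames an appropriate infinite subset $W$, and the theorem follows since distinct elements of $W$ give non-isomorphic, derived-equivalent threefolds. The crucial assertion is that $X_\bp$ determines $(\P^3,\bp)$ up to projective equivalence. A divisorial contraction of a smooth threefold to a smooth point has exceptional divisor a $\P^2$ with normal bundle $\cO(-1)$; for $\bp$ in linearly general position the only prime divisors of this kind on $X_\bp$ are $E_1,\dots,E_8$ (one checks that the strict transform of a plane, and indeed any other prime divisor, fails to be a $\P^2$ with normal bundle $\cO(-1)$). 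Hence the blow-down $X_\bp\to\P^3$ is the unique such contraction, it is preserved by any isomorphism $X_\bp\cong X_\bq$, and the latter therefore induces a projective equivalence $\bp\simeq\bq$ --- which, by the first part, holds for only finitely many pairs from the infinite orbit $W$. The step I expect to be the genuine obstacle is this non-isomorphism half: establishing the rigidity of the blow-down structure (so that the configuration $\bp$ is a bona fide isomorphism invariant of $X_\bp$) and verifying that the orbit is infinite in the strong sense required --- that the linear $G$-action on $E_\bp^{\,9}$ is faithful and does not collapse, modulo the finite group of obvious symmetries, to a finite orbit --- which is where the hypothesis that $\bp$ be very general is really used.
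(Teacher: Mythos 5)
Your overall skeleton is exactly the paper's: an infinite Cremona orbit of a general configuration, derived equivalence via Bondal--Orlov applied to the six $(-1,-1)$-flopping curves of each standard Cremona transformation, and non-isomorphism because $X_\bp$ remembers $\bp$ up to $\PGL(4)$ and permutation. The middle step is essentially identical to the paper's Lemma~\ref{sqms}. The other two steps you argue differently, and each has a real gap as written. For non-isomorphism, you rest everything on the claim that $E_1,\dots,E_8$ are the \emph{only} prime divisors of $X_\bp$ isomorphic to $\P^2$ with normal bundle $\cO(-1)$, which you assert with ``one checks.'' This is not a routine check: the Weyl group orbit of the class $E_8$ produces infinitely many candidate ``exceptional classes'' of arbitrarily large degree, and ruling out that any of them is represented by a $\P^2$ with normal bundle $\cO(-1)$ on $X_\bp$ itself (rather than on a flopped model) requires an argument you do not give. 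The paper avoids this entirely: an isomorphism $X_\bp\to X_\bq$ induces a birational self-map of $\P^3$ whose indeterminacy locus, and that of its inverse, is $0$-dimensional, and such a map is automatically an automorphism by a theorem of Bayraktar--Cantat; it then carries $\bp$ to $\bq$. If you want to keep your blow-down-rigidity route you must actually prove the classification of contractible $\P^2$'s.

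For the infinitude of the orbit, your Coble-style linear action of $G$ on $E_\bp^{\,9}$ is the classical argument (it is the content of Dolgachev--Ortland, Ch.~VI, which the paper cites), but you leave the faithfulness of the integral representation unproved, and --- more importantly --- you never verify that every Cremona transformation along the infinite orbit remains \emph{admissible}: at each step one needs that no fifth point lies on a plane spanned by three of the four centers (the paper's condition \cstar), since otherwise the new configuration acquires infinitely near or coincident points and the induced map $X_{\bp_n}\rat X_{\bp_{n+1}}$ is no longer a composition of flops of $(-1,-1)$-curves. This is not cosmetic: without it neither the derived equivalence nor even the definition of the orbit goes through, and one cannot simply invoke ``very general'' because the bad loci must first be shown to be proper, which requires exhibiting configurations where every step is defined. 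The paper's Lemma~\ref{distinct} is built precisely to handle this: it runs the orbit on a special configuration on a quartic elliptic curve, introduces root divisors, shows the unique root divisor class is $M_\sigma^n(H-\sum_{i=5}^8E_i)$, and deduces both that \cstar{} holds at every step and that the configurations are pairwise distinct (via the Jordan form of $M_\sigma$ in Lemma~\ref{linalg}). Your $E_\bp^{\,9}$ picture can be made to do the same work --- failure of \cstar{} translates into finitely many linear conditions $\bar p_{i_1}+\bar p_{i_2}+\bar p_{i_3}+\bar p_{i_4}=\bar\ell$ per step, avoided by a very general point by a countability argument --- but you need to say this, and to prove faithfulness, before the conclusion follows.
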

\noindent The example boils down to three basic observations, made precise in
Lemmas 1, 2, and 3.
\begin{enumerate}
\item \(X_\bp\) and \(X_\bq\) are isomorphic if and only if \(\bp\)
  and \(\bq\) coincide, up to permutation and an automorphism of
  \(\P^3\).
\item If \(\bq\) can be obtained from \(\bp\) by a sequence of
  standard Cremona transformations centered at \(4\)-tuples from among
  the points of \(\bp\), then \(X_\bp\) and \(X_\bq\) are connected by
  a sequence of flops of rational curves with normal bundle \(\cO(-1)
  \oplus \cO(-1)\), and so \(D(X_\bp) \cong D(X_\bq)\).
\item The orbit of a sufficiently general configuration \(\bp\) of
  \(8\) points under standard Cremona transformations is infinite.
\end{enumerate}

There are several classes of higher-dimensional varieties for which
\(D(X)\) has been shown to determine the isomorphism class of \(X\),
up to finitely many possibilities.  These include abelian varieties
over \(\C\) ~\cite{orlovabelian},\cite{favero}, toric
varieties~\cite{kawamatatoricii}, varieties with \(K_X\) ample, and
Fano varieties~\cite{bondalorlov}.  It also known that the number of
isomorphism classes of varieties with a given derived category is at
most countable~\cite{toen}. Further discussion of this problem can be
found in~\cite{favero} and \cite{rouquier}.

The example is based on the action of Cremona transformations on
configurations of points in \(\P^3\), as investigated by A.\ Coble.
Most of the results we will need can be found in Dolgachev and
Ortland's account of Coble's work~\cite{dolgachev}.  We provide
self-contained proofs, with references to the more general theory.

It is worth noting that these examples do not pose any problems for
the Kawamata-Morrison cone conjecture for klt Calabi-Yau
pairs~\cite{totaro}.  Through \(8\) general points in \(\P^3\) there
is a pencil of quadrics, with base locus a degree \(4\) curve through
the points.  If \(\Delta\) is the sum of two generic quadrics in this
pencil, then \((X,\Delta)\) is a dlt pair with \(K_X + \Delta\)
numerically trivial.  However, there is no choice of \(\Delta\) for
which \(K_X+\Delta\) is numerically trivial and the pair is klt, for
the divisor obtained by blowing up the curve in the base locus has log
discrepancy \(0\).  If the points are specialized to the base locus of
a two-dimensional net of quadrics, the pair \((X,\Delta)\) can be made
klt, but the configuration is insufficiently general for our construction,
and indeed Prendergast-Smith has demonstrated that the cone conjecture
holds for this class of examples~\cite{prendergast}.  Note also that
although the blow-up of \(\P^3\) at \(7\) very general points is a
weak Fano variety, the blow-up at \(8\) is not.

\section{The example}

For the rest of this section, \(\bp\) denotes an ordered \(8\)-tuple
of distinct points in \(\P^3\), with \(\pi_\bp : X_\bp \to \P^3\) the
blow-up of the points of \(\bp\).  Write \(E_{i}\) for the
exceptional divisors of \(\pi_\bp\) and \(H\) for the pullback to
\(X_\bp\) of the hyperplane class on \(\P^3\).

\begin{lemma}[\cite{dolgachev}, Ch.\ V.1]
\label{isomorphic}
The blow-ups \(X_\bp\) and \(X_\bq\) are isomorphic if and only if
\(\bp\) and \(\bq\) coincide, up to an automorphism of \(\P^3\) and
permutation of the points.
\end{lemma}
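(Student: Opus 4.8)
The plan is to recover the point configuration $\bp$ intrinsically from the variety $X_\bp$, so that any isomorphism $X_\bp \cong X_\bq$ must carry one configuration to the other. The "if" direction is immediate: an automorphism of $\P^3$ permuting $\bp$ to $\bq$ lifts to an isomorphism of the blow-ups. For the "only if" direction, first I would analyze the cone of effective divisors, or more precisely the $(-1)$-type extremal contractions, to show that the exceptional divisors $E_1,\dots,E_8$ are determined up to the action of a finite group. The key geometric input is that for $8$ sufficiently... — but wait, the lemma is stated for \emph{all} distinct $8$-tuples, so I cannot assume generality here. Instead I would argue as follows.

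The divisors $E_i$ are precisely the prime divisors on $X_\bp$ that are isomorphic to $\P^2$ with normal bundle $\cO(-1)$ and that can be blown down; more robustly, one characterizes them via the intersection form on $\operatorname{Pic}(X_\bp) = \mathbb{Z}H \oplus \bigoplus \mathbb{Z}E_i$, where $H^3 = 1$, $E_i^3 = -1$ (with sign conventions $E_i^2 = -(\text{line})$, etc.), and $H \cdot E_i = 0$. An isomorphism $\phi : X_\bp \to X_\bq$ induces an isometry of Picard lattices preserving the canonical class $K = -4H + 2\sum E_i$ and the effective/nef cones. First I would show that such an isometry must send the set $\{H, E_1,\dots,E_8\}$ (the "geometric basis") to another geometric basis: this is where the Cremona action enters, since the isometry group of this lattice fixing $K$ is generated by permutations of the $E_i$ and the Cremona involutions, but only the permutations preserve the \emph{nef cone} together with the property that the $H$-class is the pullback of $\cO(1)$ under a birational morphism to $\P^3$ — a Cremona transformation changes which morphism to $\P^3$ is "visible." Concretely, $H$ is characterized as the unique (up to the finite symmetry) primitive nef class $D$ with $D^3 = 1$, $D^2$ a movable curve class, and $D \cdot (\text{each extremal curve contracted by a divisorial contraction to a point}) = 0$; I expect one can pin it down by requiring $D$ to lie on the boundary of the nef cone of every birational model while being big.

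Once $H$ and the collection $\{E_i\}$ are determined by $\phi$ up to permutation, the morphism $\pi_\bp : X_\bp \to \P^3$ is recovered as the morphism associated to the linear system $|H|$ (which contracts exactly the $E_i$), and similarly $\pi_\bq$. Then $\phi$ descends to a birational self-map of $\P^3$ that is an isomorphism away from the points, hence (by Hartogs / normality, since $\P^3$ is smooth and the indeterminacy locus has codimension $\geq 2$ but a birational morphism $\P^3 \to \P^3$ with connected fibers is an isomorphism) extends to an automorphism $g \in \PGL_4$, and $g(\bp) = \bq$ as sets; matching up the $E_i$ under $\phi$ shows the bijection respects the ordering up to the permutation recorded by $\phi|_{\{E_i\}}$. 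The main obstacle is the lattice-theoretic step: showing that an isometry preserving $K$ and the nef cone cannot "mix" a genuine Cremona transformation into the identification of exceptional divisors — equivalently, that the only geometric markings of $X_\bp$ compatible with its nef cone are the $8!$ permutations of the standard one. For arbitrary (possibly special) configurations one must be slightly careful that the nef cone is large enough to force this, and I would handle the degenerate cases by a direct argument using that each $E_i \cong \P^2$ is rigid and its contraction is unique in its extremal ray.
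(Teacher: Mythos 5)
Your overall strategy (recover the marking $\{H, E_1,\dots,E_8\}$ lattice-theoretically and then descend the isomorphism to $\PGL_4$) is a legitimate alternative route, and it is close in spirit to the Dolgachev--Ortland treatment the lemma cites. But as written the proposal has a genuine gap: the step you yourself flag as ``the main obstacle'' --- that an isometry of $\operatorname{Pic}$ preserving $K_X$ and the nef cone must carry the geometric basis to a permutation of itself --- is exactly the content of the lemma, and it is left entirely unproved. The auxiliary characterizations you lean on are also only asserted: that the $E_i$ are the \emph{only} prime divisors isomorphic to $\P^2$ with normal bundle $\cO(-1)$, and that $H$ is pinned down as ``the unique primitive nef class with $D^3=1$ and \dots'' (you hedge this with ``I expect''). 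These claims are plausible for very general $\bp$, but the lemma is stated for \emph{arbitrary} distinct $8$-tuples, where the nef and effective cones degenerate and a case analysis would be needed; you acknowledge this but defer it to ``a direct argument'' that is not given. A smaller point: the parenthetical Hartogs/normality appeal at the end does not by itself extend a rational self-map of $\P^3$ over a finite indeterminacy set (the standard Cremona involution is a counterexample to that reasoning); what actually closes that step is that $\phi^*$ carries $\pi_\bq^*\cO(1)$ to $\pi_\bp^*\cO(1)$, so $\pi_\bq\circ\phi$ and $\pi_\bp$ are the morphisms defined by the same base-point-free linear system --- but that again presupposes the unproved lattice step.

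For contrast, the paper's proof bypasses the Picard lattice entirely. Given $\phi: X_\bp \to X_\bq$, the composite $\psi = \pi_\bq \circ \phi \circ \pi_\bp^{-1}$ is a birational self-map of $\P^3$ whose indeterminacy locus, and that of its inverse, is contained in a finite set of points; a theorem of Bayraktar--Cantat then says any such map is an automorphism. Since $\psi \circ \pi_\bp$ contracts each $E_i$ to a point, $\pi_\bq$ contracts $\phi(E_i)$, which forces $\psi(\bp) = \bq$ and identifies the exceptional divisors up to permutation. This works uniformly for all distinct configurations with no cone analysis. If you want to salvage your approach, the essential missing lemma is the rigidity of the geometric marking under nef-cone-preserving isometries; otherwise I would recommend restructuring along the lines of the direct geometric argument.
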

\begin{proof}
  Suppose that \(\phi : X_\bp \to X_\bq\) is an isomorphism.  The
  restriction \(\P^3 \setminus \bp \cong X_\bp \setminus \bigcup E_i
  \to X_\bq \to \P^3\) defines a rational map \(\psi : \P^3 \rat
  \P^3\), with indeterminacy locus contained in \(\bp\) and hence
  \(0\)-dimensional.  Its inverse is likewise regular outside a
  \(0\)-dimensional set, contained in \(\bq\).  But any rational map
  \(\psi : \P^3 \rat \P^3\) for which \(\psi\) and \(\psi^{-1}\) both
  have \(0\)-dimensional indeterminacy sets is in fact an
  automorphism (e.g.\ by Theorem 1.1 of~\cite{bayraktarcantat}).  
  Now, \(\psi \circ \pi_\bp\) contracts \(E_i\) to a point, and so
  \(\pi_\bq\) must contract \(\phi(E_i)\) to a point.  Consequently
  \(\psi(\bp) = \bq\), so the configurations differ by a permutation
  and automorphism, and \(\phi\) identifies the exceptional divisors.
\end{proof}

The basic ingredient in constructing other blow-ups which are
derived-equivalent to a given one is the action of the standard
Cremona transformation \(\Cr : \P^3 \rat \P^3\), defined by
\([X_0:X_1:X_2:X_3] \mapsto [X_0^{-1}:X_1^{-1}:X_2^{-1}:X_3^{-1}]\).
A resolution of this rational map is as follows.
\[
\xymatrix{
 & Y \ar[dl]_p \ar[dr]^{p^\prime} & \\
X \ar@{-->}[rr]^{\overline{\Cr}} \ar[d]_\pi &&  X^\prime \ar[d]^{\pi^\prime} \\
\P^3 \ar@{-->}[rr]^{\Cr} && \P^3
}
\]
Here \(\pi\) blows up the four standard coordinate points.  The strict
transforms on \(X\) of the six lines \(\ell_{ij}\) between two of
these points are smooth rational curves with normal bundle \(\cO(-1)
\oplus \cO(-1)\).  These are flopped by \(\overline{\Cr}\); \(p\)
blows up these curves to divisors isomorphic to \(\P^1 \times \P^1\),
which are then contracted along the other ruling by \(p^\prime\).  The
indeterminacy locus of \(\overline{\Cr} : X \rat X^\prime\) is the
union of these curves. The map \(\pi^\prime\) then blows down the
strict transforms of the four planes through three of the four
original points.

Suppose that \(\bp\) is a configuration of \(8\) points (regarded now
as a point on the configuration space \((\P^3)^8 \git \PGL(4)\)), and
four points are chosen from among \(\bp\) satisfying the following
condition:
\begin{itemize}
\item[\cstar] No other point of \(\bp\) lies on any plane defined by
  three of the four chosen points.
\end{itemize}
Condition \cstar{} implies that the Cremona transformation centered at
the four given points is defined, as these are not coplanar.  It also
guarantees that no point of \(\bp\) is on one of the contracted
divisors or one of the lines in the indeterminacy locus.  We can
define a new configuration \(\bq\) by making a Cremona transformation
centered at the four chosen points, and moving the remaining four
points under that transformation: if \(p_j\) is one of the chosen
points, then \(q_j\) is defined as the image of the plane through the
other three points (which is contracted by \(\pi^\prime\)), while if
\(p_j\) is not a chosen point, then \(q_j\) is just the image of
\(p_j\) under the Cremona transformation. The configuration \(\bq\) is
defined up to choice of coordinates, and no two points of \(\bq\) are
infinitely near, since no point of \(\bp\) is on one of the contracted
divisors.  After blowing up the points of \(\bp\) and \(\bq\), there
is a rational map \(\overline{\Cr} : X_\bp \rat X_\bq\) which flops
six rational curves of normal bundle \(\cO(-1) \oplus \cO(-1)\).

We will say that \(\bp\) and \(\bq\) are \emph{Cremona equivalent} if
there exists a sequence of Cremona transformations centered at \(4\)-tuples
from among the points which sends the configuration \(\bp\) to
\(\bq\), and satisfies condition \cstar{} at each step.  The
\emph{Cremona orbit} of \(\bp\) is the set of all configurations that
are Cremona equivalent to \(\bp\).  If \(\bp\) is a very general
configuration of points, then any sequence of Cremona transformations
will automatically satisfy \cstar{}; however, we prefer not to make
any blanket generality assumption on \(\bp\) at this stage, as it will
be useful to consider \(8\)-tuples in slightly special configurations.

\begin{lemma}
\label{sqms}
If \(\bp\) and \(\bq\) are Cremona equivalent, then \(\D(X_\bp) \cong
\D(X_\bq)\).
\end{lemma}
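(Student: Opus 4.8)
The plan is to reduce immediately to a single Cremona transformation and then to recognize the resulting birational map $\overline{\Cr} : X_\bp \rat X_\bq$ as a composite of standard (Atiyah) flops, each of which is known to induce an equivalence of bounded derived categories. Since derived equivalence is transitive, and Cremona equivalence is by definition witnessed by a finite chain of Cremona transformations each satisfying \cstar{}, it suffices to prove the statement when $\bq$ is obtained from $\bp$ by one Cremona transformation centered at four of the eight points, say $p_1,p_2,p_3,p_4$, with \cstar{} holding. Relabel coordinates so that these are the four coordinate points, so that $\Cr$, its resolution $Y$, and the maps $p, p'$ are exactly as in the diagram preceding the lemma. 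Write $L_{ij} \subset X_\bp$ for the strict transform of the line $\ell_{ij}$ joining $p_i$ and $p_j$, for $1 \le i < j \le 4$.

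The first substantive step is to check that the six curves $L_{ij}$ are pairwise disjoint smooth rational curves on $X_\bp$ with normal bundle $\cO(-1)\oplus\cO(-1)$. Each line $\ell_{ij}$ lies on a plane spanned by three of the four chosen points, so condition \cstar{} guarantees that none of the remaining points $p_5,\dots,p_8$ lies on any $\ell_{ij}$; hence in a neighbourhood of $\ell_{ij}$ the blow-up $\pi_\bp : X_\bp \to \P^3$ agrees with the blow-up $\pi : X \to \P^3$ of the four coordinate points alone, and the excerpt already records that on $X$ these strict transforms are $(-1,-1)$-curves. Disjointness holds because two distinct lines among $p_1,\dots,p_4$ meet only at one of the $p_i$, and that intersection point is separated after blowing up $p_i$.

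The main geometric point is to verify that $\overline{\Cr} : X_\bp \rat X_\bq$ is precisely the flop of the disjoint curves $L_{12},\dots,L_{34}$. Here one again uses \cstar{}: it ensures that the ``passenger'' points $p_5,\dots,p_8$, and their images $q_5,\dots,q_8$, all lie in the open locus over which $\Cr$ (equivalently $\overline{\Cr}$) restricts to an isomorphism, so that blowing them up does not disturb the structure of the resolution described in the excerpt. Pulling that resolution back over these eight points produces a common smooth model with two birational contractions onto $X_\bp$ and onto $X_\bq$, each of which blows up the (disjoint) strict transforms of the $\ell_{ij}$ to divisors isomorphic to $\P^1\times\P^1$ and then contracts the opposite ruling --- this is exactly the standard simultaneous flop diagram for the six curves, and a local check on the $\P^1\times\P^1$'s confirms the normal bundles are $\cO(-1)\oplus\cO(-1)$ on the $X_\bq$ side as well. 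Finally I would invoke the theorem of Bondal and Orlov that the standard flop of an $\cO(-1)\oplus\cO(-1)$ curve induces an equivalence of derived categories (alternatively Bridgeland's theorem on threefold flops), apply it once for each of the six disjoint curves, and chain the single-step equivalences to conclude $\D(X_\bp)\cong\D(X_\bq)$.

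I expect the third step to be the only real obstacle: the bookkeeping needed to confirm that, on the blow-ups of the full $8$-point configurations, $\overline{\Cr}$ genuinely decomposes as flops and nothing more --- in particular tracking the strict transforms of the four coordinate planes through $\pi'$ and using \cstar{} to see that the four extra points are simply carried isomorphically from one side to the other. Everything else is either formal (transitivity, reduction to one step) or a standard local computation on blow-ups of $\P^3$ along lines.
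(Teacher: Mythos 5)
Your proposal follows essentially the same route as the paper: reduce by transitivity to a single Cremona transformation satisfying \cstar{}, identify \(\overline{\Cr} : X_\bp \rat X_\bq\) as the simultaneous flop of the six disjoint \(\cO(-1)\oplus\cO(-1)\) curves \(L_{ij}\), and conclude by the Bondal--Orlov flop equivalence. The geometric verification you spell out (that \cstar{} keeps \(p_5,\dots,p_8\) off the lines \(\ell_{ij}\) and off the contracted planes, so the resolution diagram for \(\Cr\) pulls back unchanged over the extra blown-up points) is exactly what the paper's one-line appeal to \cstar{} is implicitly relying on, and is correct.

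There is, however, one genuine subtlety that your plan to ``apply it once for each of the six disjoint curves, and chain the single-step equivalences'' glosses over, and which the paper explicitly flags: Bondal and Orlov's theorem is stated for smooth \emph{projective} threefolds, but when you factor the simultaneous flop of six disjoint curves into six successive single flops, the intermediate varieties need not be projective. So the theorem as usually cited does not literally apply at the intermediate steps. The paper resolves this by invoking the version of the flop equivalence valid without projectivity hypotheses (Huybrechts, Remark 11.24ii). Your parenthetical fallback to Bridgeland's theorem would also work, but only if applied to the single flopping contraction collapsing all six curves at once rather than curve by curve; as written, the curve-by-curve chaining is where your argument has a hole.
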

\begin{proof}
  Assumption \cstar{} on the Cremona transformations implies that there is
  a sequence of rational maps \(X_\bp = X_{\bp_0} \rat X_{\bp_1} \rat
  \cdots \rat X_{\bp_n} = X_\bq\), where each \(X_{\bp_j} \rat
  X_{\bp_{j+1}}\) flops six curves.  The lemma is then a consequence
  of a fundamental result of Bondal and Orlov~\cite[Theorem
  4.3]{bondalorlov}: if \(X\) and \(X^+\) are threefolds, and \(\phi :
  X \rat X^+\) is the flop of a rational curve with normal bundle
  \(\cO(-1) \oplus \cO(-1)\), then \(\D(X) \cong \D(X^+)\).
\end{proof}

There is a subtlety here in that each of these rational maps flops six
disjoint curves, while the theorem of Bondal and Orlov is usually
stated for the flop of a single curve.  Each map can be factored into
a sequence of six disjoint flops, but the intermediate varieties
encountered are no longer projective. However, the needed result is
valid without assuming \(X\) and \(X^+\) are projective~\cite[Remark
11.24ii]{huybrechts}.

\begin{lemma}[cf.~\cite{dolgachev}, Ch.\ VI]
\label{distinct}
A very general configuration \(\bp\) of \(8\) points has infinite
Cremona orbit.
\end{lemma}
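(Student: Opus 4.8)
The plan is to recast the standard Cremona transformations as an action of a Coxeter group $W$ on $\mathrm{Pic}(X_\bp)$, to identify $W$ as an infinite affine Weyl group, and then to transfer that infinitude to point configurations using the freeness of the Cremona action on very general point sets, following Coble and Dolgachev--Ortland. First I would set up the lattice. On $L_\bp:=\mathrm{Pic}(X_\bp)=\mathbb{Z}H\oplus\bigoplus_{i=1}^{8}\mathbb{Z}E_i$ put the symmetric bilinear form $(x,y):=-\tfrac12\,x\cdot y\cdot K_{X_\bp}$, where $K_{X_\bp}=-4H+2\sum_i E_i$; a short computation gives $(H,H)=2$, $(E_i,E_i)=-1$, and the other pairings zero, so the signature is $(1,8)$. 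Since a rational curve $C$ with normal bundle $\cO(-1)\oplus\cO(-1)$ satisfies $K_{X_\bp}\cdot C=0$, this form is preserved by the flops of Lemma~\ref{sqms}: so if $\bq$ arises from $\bp$ by a Cremona transformation centred at a $4$-subset $S$, the resulting map $\mathrm{Pic}(X_\bq)\xrightarrow{\ \sim\ }L_\bp$ is an isometry sending $H_\bq\mapsto 3H-2\sum_{i\in S}E_i$ and, for $j\in S$, $E_j(\bq)\mapsto H-\sum_{i\in S\setminus\{j\}}E_i$ (and fixing $E_j$ for $j\notin S$) --- that is, the reflection in the $(-2)$-vector $\alpha_S:=H-\sum_{i\in S}E_i$. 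The reflections $s_{\alpha_S}$ over all $4$-subsets $S$, together with the reflections in $E_i-E_{i+1}$ coming from relabelling the points, generate a group $W\le O(L_\bp)$ fixing $K_{X_\bp}$; being generated by reflections in $(-2)$-vectors, it is a Coxeter group.

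Next I would identify $W$. Computing the inner products of the simple roots $\alpha_{\{1,2,3,4\}}$ and $E_1-E_2,\dots,E_7-E_8$ shows the Coxeter diagram is $T_{2,4,4}$: the seven roots $E_i-E_{i+1}$ form an $A_7$ chain, and $\alpha_{\{1,2,3,4\}}$ attaches to its central node. Since $\tfrac12+\tfrac14+\tfrac14=1$, this is the affine diagram $\tilde{E}_7$, so $W\cong W(E_7)\ltimes Q(E_7)$ is \emph{infinite}, containing the rank-$7$ translation lattice $Q(E_7)$. (For $7$ points the same recipe gives $T_{2,3,4}=E_7$ and the finite group $W(E_7)$, consistent with $\mathrm{Bl}_7\P^3$ being weak Fano while $\mathrm{Bl}_8\P^3$ is not.) Hence the $W$-orbit of $H$ in $L_\bp$ is infinite, because $\mathrm{Stab}_W(H)$ is finite: an element fixing $H$ and $K_{X_\bp}$ lies in the isometry group of the negative definite lattice $H^\perp=\langle E_1,\dots,E_8\rangle$ and fixes $\sum_i E_i$, so it is just the $S_8$ permuting the $E_i$. (One also sees that for each $d$ only finitely many classes $dH-\sum a_iE_i$ in the orbit have that ``degree'', since $(x,x)=2$ and $(x,K_{X_\bp})=-8$ force $\sum a_i=4(d-1)$ and $\sum a_i^2=2(d^2-1)$.)

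Finally I would pass from the orbit of $H$ to the Cremona orbit of $\bp$. A Cremona sequence satisfying \cstar{} from $\bp$ to $\bq$ realises, via the isometries above, an element $w\in W$ with $H_\bq$ identified to $w(H)$ and each $E_i(\bq)$ to $w(E_i)$; conversely $w$ recovers $\bq$ up to projective equivalence, as the birational map $X_\bp\rat X_\bq\to\P^3$ is determined by the movable part of $|w(H)|$ and the images of the $w(E_i)$. So $w(H)\mapsto[\bq_w]$ is a well-defined surjection from the infinite $W$-orbit of $H$ onto the set of configurations Cremona equivalent to $\bp$, taken up to permutation and projective equivalence (if $w(H)=w'(H)$ then $w'w^{-1}\in\mathrm{Stab}_W(H)=S_8$, so $\bq_{w'}$ is a relabelling of $\bq_w$). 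I would then check that this surjection has finite fibres when $\bp$ is very general: the fibre over $[\bq_{w_0}]$ consists of the classes $w(H)$ with $\bq_w\cong\bq_{w_0}$; such a $\bq_w$ is a relabelling $\bq_{\sigma w_0}$ of $\bq_{w_0}$ for some $\sigma\in S_8$, and if the Cremona action on the very general set $\bp$ has trivial isotropy then $\bq_w=\bq_{\sigma w_0}$ forces $w=\sigma w_0$, leaving only the $|S_8|$ possibilities. An infinite source then forces an infinite target, and since $S_8$ is finite the orbit stays infinite after passing to configurations up to permutation and projective equivalence, which with Lemma~\ref{isomorphic} yields the Theorem.

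The step needing real care --- and the only place where the generality of $\bp$ is used --- is this last one: that no nontrivial $w\in W$ fixes a very general configuration up to relabelling, equivalently that a very general $X_\bp$ has no nontrivial pseudo-automorphism permuting its exceptional divisors, equivalently that the $8$ points admit no projective symmetry realised by a composition of standard Cremona transformations. With no weak-Fano finiteness available to force this, I would deduce the triviality of the Cremona isotropy of a very general point set from the analysis of the Cremona action in Dolgachev--Ortland~\cite[Ch.~VI]{dolgachev}.
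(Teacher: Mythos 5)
Your proposal takes a genuinely different route from the paper: you work with the Coxeter group $W$ of type $T_{2,4,4}=\tilde E_7$ acting on $N^1(X_\bp)$ and argue via orbit--stabilizer, whereas the paper deliberately avoids this (relegating it to a closing remark with a citation to Dolgachev--Ortland) and instead gives a self-contained argument by iterating one explicit Coxeter element on one explicit configuration. Your lattice-theoretic groundwork is correct and nicely done: the form, the identification of the Cremona transformation with the reflection in $H-\sum_{i\in S}E_i$, the affine diagram, the finiteness of $\mathrm{Stab}_{O(L)}(H,K)=S_8$, and the unboundedness of degrees in the orbit of $H$ all check out, and they recover the paper's Lemma~\ref{linalg} in a more conceptual way.

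The genuine gap is exactly where you flag it, and it is not a gap you are entitled to leave open, because it carries essentially the entire content of the lemma. Two distinct geometric facts are deferred to the citation: (i) that for a very general $\bp$ the Cremona action of each word $w\in W$ is actually \emph{defined}, i.e.\ condition \cstar{} holds at every step (for each word this is an open condition on $(\P^3)^8$, but one must exhibit at least one configuration where it is nonempty --- this cannot be read off from the lattice action alone); and (ii) that distinct classes $w(H)\neq w'(H)$ yield distinct configurations, i.e.\ the isotropy of $\bp$ under the \emph{geometric} action is finite. Point (ii) is genuinely delicate: there exist configurations (e.g.\ the eight base points of a net of quadrics) with infinite Cremona isotropy and finite orbit, so some honest use of generality is unavoidable, and ``deduce it from Dolgachev--Ortland'' is not a proof. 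The paper's device for closing both gaps simultaneously is the special configuration of $8$ points on a genus-$1$ complete intersection of quadrics: there the curve forces any ``root divisor'' (prime, $4d-\sum m_i=0$) to be disjoint from $\bar C$, whence by the choice of points on $C$ there is a \emph{unique} root divisor, namely $H-\sum_{i=5}^8E_i$. Tracking its class under $M_\sigma^n$ verifies \cstar{} at every step (no four points of $\bp_n$ are coplanar except the designated four at $n=0$) and distinguishes the configurations $\bp_n$ from one another by the degree of their unique root divisor; very general configurations then inherit the infinite orbit by semicontinuity. If you want to keep your Coxeter-theoretic framing, you still need to supply an invariant of the configuration (such as the set of classes of root divisors) that separates the points of the orbit, which is precisely what the elliptic-curve construction provides.
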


\begin{proof}
  We will define a sequence of configurations in which \(\bp_{n+1}\)
  is obtained from \(\bp_n\) by a Cremona transformation centered at
  four points, satisfying condition \cstar{}, and such that the
  \(\bp_n\) are all distinct.  Let \(C = C_0\) be a smooth genus \(1\)
  curve in \(\P^3\) obtained as the complete intersection of two
  smooth quadrics.  Choose \(p_{5}\), \(p_{6}\), \(p_{7}\), and
  \(p_8\) to be the four points of intersection of some generic
  hyperplane \(H\) with \(C\), and then choose \(p_1\), \(p_2\),
  \(p_3\), and \(p_{4}\) to be very general points of \(C\); this
  guarantees that if \(4d - \sum_{i=1}^8 m_i = 0\), the class
  \(dH\vert_C - \sum_{i=1}^8 m_i p_i \in \Pico(C)\) is nonzero unless
  \(m_1 = \cdots = m_{4} = 0\) (cf.~\cite[Lemma 2.4]{lu2}).

  Let \(\bar{C}\) denote the strict transform of \(C\) on
  \(X_{\bp_0}\), and say that a prime divisor \(D \sim dH -
  \sum_{i=1}^8 m_i E_i\) on \(X_{\bp_0}\) is a \emph{root divisor} if
  it does not contain \(\bar{C}\) and satisfies \(4d - \sum_{i=1}^8
  m_i = 0\).  If \(D\) is a root divisor, then \(D \cdot \bar{C} = 0\)
  and so \(D\) and \(\bar{C}\) are disjoint.  This implies that
  \(dH\vert_C - \sum_{i=1}^8 m_i p_i \in \Pico(C)\) is trivial, and so
  \(m_1 = \cdots = m_4 =0\).  If \(m_5\), \(m_6\), \(m_7\), and
  \(m_8\) are not all equal, then some \(m_i\) is greater than \(d\),
  and the corresponding class cannot be effective.  Hence the only
  root divisor on \(X_{\bp_0}\) is the strict transform of the plane
  through the last four points, with numerical class \(H -
  \sum_{i=5}^8 E_i\).

  We now inductively define \(\bp_{n+1}\) by making Cremona
  transformation centered at the first four points of \(\bp_n\), and
  then cyclically permuting the points so that \(p_1\) comes last.  At
  each step, we show that the first four points of \(\bp_n\) satisfy
  condition \cstar{}.  These transformations induce rational maps
  \(X_{\bp_n} \rat X_{\bp_{n+1}}\); let \(\bar{C}_n\) denote the
  strict transform of \(\bar{C}\) on \(X_{\bp_n}\), and \(C_n\) its
  image in \(\P^3\).  Since \(C_0\) is the intersection of two
  quadrics, and Cremona transformations preserve quadrics through the
  \(8\) points, each \(C_n\) is also an intersection of two quadrics.
  In particular, no \(\bar{C}_n\) can be contained in the
  indeterminacy locus of \(X_{\bp_{n}} \rat X_{\bp_{n+1}}\).

  Suppose that \(\bp\) is a configuration of points and that \(\bq\)
  is the configuration obtained by making a standard Cremona
  transformation centered at the first four points of \(\bp\).  If
  \(D\) is a divisor in the class \(dH - \sum_{i=1}^8 m_i E_{i}\) on
  \(X_\bp\), then its strict transform on \(X_\bq\) has class
  \(d^\prime H^\prime - \sum_{i=1}^8 m_i E_i^\prime\), where
  \(d^\prime = 3d - \sum_{i=1}^4 m_i\), \(m_i^\prime =
  2d+m_i-\sum_{i=1}^4 m_i\) for \(1 \leq i \leq 4\), and \(m_i^\prime
  = m_i\) for \(5 \leq i \leq 8\)~\cite{lu2}.  Write \(M : N^1(X_\bp) \to
  N^1(X_\bq)\) for the corresponding linear map.

  Let \(M_\sigma =PM\), where \(M\) is as above and \(P\) is the
  permutation matrix which permutes the exceptional divisors by moving
  the first one to last.  If \(D\) is a divisor on \(X_{\bp_i}\), its
  strict transform on \(X_{\bp_{i+1}}\) has class \(M_\sigma(D)\). The
  map \(M_\sigma^n : N^1(X_{\bp_0}) \to N^1(X_{\bp_n})\) preserves the
  effective cones, as well as the property that \(4d - \sum_{i=1}^8
  m_i = 0\).

  Suppose that \(D \sim dH - \sum_{i=1}^8 m_i E_i\) is a root divisor
  on \(X_{\bp_n}\) (i.e.\ prime, not containing \(\bar{C}_n\), and
  with \(4d - \sum_{i=1}^8 m_i = 0\)).  Then the strict transform of
  \(D\) on \(X_{\bp_0}\) is a root divisor as well.  Since there is a
  unique such divisor on \(X_{\bp_0}\), we conclude that \(D\) has
  numerical class \(M_\sigma^n(H - \sum_{i=5}^8 E_i)\) on
  \(X_{\bp_n}\).  It is straightforward to check that the classes
  \(M_\sigma^n(H - \sum_{i=5}^8 E_i)\) are all distinct; the argument
  is indicated in Lemma~\ref{linalg}, which is postponed until the end
  of this section.

  It follows that condition \cstar{} holds for the configuration
  \(\bp_n\): if any four points \(p_{j_1}\), \dots, \(p_{j_4}\) of
  \(\bp_n\) were coplanar, then \(H - \sum_{i=1}^4 E_{j_i} \) would be
  a root divisor on \(X_{\bp_n}\), which is possible only if \(n=0\)
  and the points in question are \(p_5\), \(p_6\), \(p_7\), and
  \(p_8\).  In particular, the Cremona transformation defining
  \(\bp_{n+1}\) is well-defined for every \(n\), and this gives an
  infinite sequence of configurations connected by Cremona
  transformations, all satisfying \cstar{}.  Since the degrees of the
  classes \(M_\sigma^n(H- \ \sum_{i=5}^8 E_i)\) grow unboundedly,
  there are infinitely many distinct configurations among the
  \(\bp_n\), even up to permutation of the points.  As the Cremona
  orbit is infinite for the special configuration \(\bp_0\), it is
  also infinite for very general configurations.
\end{proof}

\begin{proof}[Proof of Theorem]
  Let \(\bp\) be a very general \(8\)-tuple of points in \(\P^3\), and
  let \(W\) be the Cremona orbit of \(\bp\).  By Lemma~\ref{distinct},
  \(W\) contains infinitely many distinct configurations of points,
  even up to permutations.  Lemma~\ref{sqms} then shows that the
  blow-ups \(X_{\bq}\) for \(\bq \in W\) are all derived-equivalent,
  but by Lemma~\ref{isomorphic} no two are isomorphic.
\end{proof}

\begin{remarks}
  In fact this construction can easily be generalized to
  configurations of \(k \geq 8\) points. Permutations of the points,
  together with the Cremona transformation at the first four, generate
  the action of a Coxeter group of type \(T_{2,4,k-4}\) on the
  configuration space \((\P^3)^k \git \PGL(4)\) by birational maps.
  This group is infinite as soon as \(k \geq 8\), and sufficiently
  general configurations have infinite orbits.  This is explored in
  detail in \cite[Ch.\ VI]{dolgachev}.  The construction in
  Lemma~\ref{distinct} simply follows the orbit of a single special
  configuration under the iteration of a Coxeter element in this
  group.

  The property of having only a finite orbit under Cremona
  transformations is quite special: a complete classification of such
  configurations in \(\P^2\) has been given by Dolgachev and
  Cantat~\cite{dolgachevcantat}.  The case in which \(\bp\) is a
  configuration which is Cremona equivalent to itself under some
  nontrivial sequence of Cremona transformations is also quite
  interesting; in this case there is a pseudoautomorphism \(\phi :
  X_\bp \rat X_\bp\)~\cite{perronizhang},
  inducing an autoequivalence \(D(X_\bp) \to D(X_\bp)\).
\end{remarks}

\begin{lemma}
\label{linalg}
  For any \(m\) and \(n\), the classes \(M_\sigma^m(H - \sum_{i=5}^8
  E_i)\) and \(M_\sigma^n(H - \sum_{i=5}^8 E_i)\) are distinct.
\end{lemma}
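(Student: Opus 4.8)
The plan is to show that the \(M_\sigma\)-orbit of \(v := H - \sum_{i=5}^8 E_i\) is periodic modulo a fixed invariant line, along which it drifts; this forces the iterates to be pairwise distinct. The first step is to pin down the invariant line. I claim \(M_\sigma\) fixes the class \(w := 2H - \sum_{i=1}^8 E_i\) (equivalently, \(w = -\tfrac12 K_{X_{\bp_0}}\)). This is immediate from the transformation formula recalled in the proof of Lemma~\ref{distinct}: substituting \(d = 2\) and \(m_1 = \cdots = m_8 = 1\) gives \(d' = 3\cdot 2 - 4 = 2\) and \(m_i' = 2\cdot 2 + 1 - 4 = 1\) for \(i \le 4\), so the Cremona matrix \(M\) fixes \(w\), and the permutation \(P\) does so trivially. (That such a \(w\) exists is not a surprise: the permutations together with the Cremona involution generate a Coxeter group of type \(T_{2,4,4}\) — the affine group \(\widetilde{E_7}\) — acting on the rank-\(9\) lattice \(N^1(X_{\bp_0})\), and this action fixes the anticanonical class \(-K_{X_{\bp_0}}\), hence its half \(w\).)

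The next step, and the one point requiring a direct check, is to compute the iterates \(v_n := M_\sigma^n(v)\) for small \(n\) from the formula for \(M\) and the cyclic shift \(P\), and to observe that \(v_6 = v_0 + w\). Since \(M_\sigma\) is linear with \(M_\sigma(w) = w\), applying \(M_\sigma^n\) to this identity gives \(v_{n+6} = v_n + w\) for all \(n\), and hence \(v_{6a+i} = v_i + a\,w\) for every integer \(a\) and every \(0 \le i \le 5\). Thus the whole orbit lies in the union of the six progressions \(\set{\, v_i + a w : a \in \mathbb{Z}\,}\), \(i = 0, \dots, 5\).

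It then remains to see that these progressions are pairwise disjoint, i.e.\ that no two of \(v_0, \dots, v_5\) are congruent modulo \(\mathbb{Z}w\). This is a finite computation: two classes \(dH - \sum m_i E_i\) and \(d'H - \sum m_i' E_i\) differ by an integer multiple of \(w = 2H - \sum E_i\) precisely when the integer tuples \((2m_i - d)_{1 \le i \le 8}\) agree, and one simply reads off that the six tuples attached to \(v_0, \dots, v_5\) are distinct. Granting this, suppose \(M_\sigma^m(v) = M_\sigma^n(v)\) and write \(m = 6a+i\), \(n = 6b+j\) with \(0 \le i, j \le 5\); then \(v_i + aw = v_j + bw\), which forces \(i = j\) (incongruence mod \(\mathbb{Z}w\)) and then \(a = b\) (as \(w \neq 0\)), so \(m = n\).

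The only genuine content, I expect, is locating the invariant class \(w\) and checking the drift \(v_6 = v_0 + w\); everything after that is routine linear algebra over \(\mathbb{Z}\). One can sidestep guessing the exact period by noting that some power of \(M_\sigma\) acts trivially modulo \(\mathbb{Z}w\) and that the degree component of \(v_n\) is unbounded, but establishing the latter again reduces to checking that the drift along \(\mathbb{Z}w\) is nonzero, so little is saved.
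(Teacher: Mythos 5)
Your argument is correct, and I checked the two computational claims it rests on: with \(v_n = M_\sigma^n(H-\sum_{i=5}^8 E_i)\) one finds \(v_1=(3;2,2,2,1,1,1,1,2)\), \(v_2=(2;1,1,0,1,1,1,2,1)\), \(v_3=(3;2,1,2,1,1,2,1,2)\), \(v_4=(3;1,2,1,1,2,1,2,2)\), \(v_5=(4;3,2,2,2,1,2,2,2)\), \(v_6=(3;1,1,1,1,2,2,2,2)=v_0+w\) in the notation \((d;m_1,\dots,m_8)\), the class \(w=2H-\sum_{i=1}^8 E_i\) is indeed fixed by \(M_\sigma\), and the six invariants \((2m_i-d)_i\) of \(v_0,\dots,v_5\) are pairwise distinct. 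However, your route is genuinely different from the paper's. The paper computes the Jordan decomposition of the explicit \(9\times 9\) matrix \(M_\sigma\), observes a \(3\times 3\) Jordan block at the eigenvalue \(1\), and notes that \(v_0\) has nonzero coefficients on two generalized eigenvectors of that block, so the iterates cannot repeat. You instead exhibit the quasi-periodicity \(v_{n+6}=v_n+w\) directly, reducing the lemma to a finite check over \(\mathbb{Z}\) with no spectral theory; this is more self-contained, and as a bonus it makes the linear, unbounded growth of the degrees (\(\deg v_{6a+i}=\deg v_i+2a\)) explicit, which is a fact the proof of Lemma~\ref{distinct} also invokes when it asserts that the degrees ``grow unboundedly.'' The two arguments are of course two faces of the same phenomenon: \(M_\sigma\) is a parabolic isometry of the rank-\(9\) hyperbolic lattice fixing the isotropic class \(w=-\tfrac12 K_{X_{\bp_0}}\), and your translation by \(w\) after six steps is exactly what the paper's unipotent Jordan block encodes. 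The paper's spectral formulation generalizes more readily to \(k>8\) points, where the Coxeter element becomes hyperbolic and the growth exponential, whereas your argument exploits the affine (\(T_{2,4,4}=\widetilde{E_7}\)) structure special to \(k=8\); for the lemma as stated, either suffices.
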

\begin{proof}
  Explicitly, the matrix \(M_\sigma\) is given with respect to the
  basis \(H,E_i\) as
\setlength{\arraycolsep}{2pt}
\[
M_\sigma = \begin{tiny}\left(
\begin{array}{rrrrrrrrr}
3 & 1 & 1 & 1 & 1 & 0 & 0 & 0 & 0 \\ 
-2 & -1 & 0 & -1 & -1 & 0 & 0 & 0 & 0 \\ 
-2 & -1 & -1 & 0 & -1 & 0 & 0 & 0 & 0 \\ 
-2 & -1 & -1 & -1 & 0 & 0 & 0 & 0 & 0 \\ 
0 & 0 & 0 & 0 & 0 & 1 & 0 & 0 & 0 \\ 
0 & 0 & 0 & 0 & 0 & 0 & 1 & 0 & 0 \\ 
0 & 0 & 0 & 0 & 0 & 0 & 0 & 1 & \phantom{-}0 \\ 
0 & \phantom{-}0 & \phantom{-}0 & \phantom{-}0 & \phantom{-}0 & \phantom{-}0 & \phantom{-}0 & \phantom{-}0 & 1 \\ 
-2 & 0 & -1 & -1 & -1 & 0 & 0 & 0 & 0
\end{array}\right)
\end{tiny}
\]
Let \(M_\sigma = SJS^{-1}\) be the Jordan decomposition.  A
computation shows that \(J\) has a \(3 \times 3\) Jordan block
associated to the eigenvalue \(1\).  One can compute the coefficients of
\(H-\sum_{i=5}^8 E_i\) in the Jordan basis as the entries of \(S^{-1}
\left( H-\sum_{i=5}^8 E_i \right)\), and observe \(H-\sum_{i=5}^8
E_i\) has nonzero coefficients for two of the generalized eigenvectors
in the nontrivial Jordan block.  It follows that the powers
\(M_\sigma^n(H-\sum_{i=5}^8 E_i)\) are all distinct. 
\end{proof}
For a more enlightened perspective on this calculation from the point
of view of Coxeter groups, we refer to~\cite[\S 2]{perronizhang}
and~\cite[Thm.\ 2.2]{mcmullen}.

\section*{Acknowledgements}

I benefited from discussions of this example with a number of people,
including Roberto Svaldi, Eric Bedford, Paul Seidel, and especially my
advisor, James M\textsuperscript{c}Kernan, who suggested numerous
improvements to early drafts.

\singlespacing

\nocite{mukai}

\bibliographystyle{amsplain}
\bibliography{zrefs}

\providecommand{\bysame}{\leavevmode\hbox to3em{\hrulefill}\thinspace}
\providecommand{\MR}{\relax\ifhmode\unskip\space\fi MR }
\providecommand{\MRhref}[2]{%
  \href{http://www.ams.org/mathscinet-getitem?mr=#1}{#2}
}
\providecommand{\href}[2]{#2}
\begin{thebibliography}{10}

\bibitem{toen}
Mathieu Anel and Bertrand To{\"e}n, \emph{D\'enombrabilit\'e des classes
  d'\'equivalences d\'eriv\'ees de vari\'et\'es alg\'ebriques}, J. Algebraic
  Geom. \textbf{18} (2009), no.~2, 257--277.

\bibitem{bayraktarcantat}
Turgay Bayraktar and Serge Cantat, \emph{Constraints on automorphism groups of
  higher dimensional manifolds}, pre-print (2013).

\bibitem{bondalorlov}
A.~Bondal and D.~Orlov, \emph{Derived categories of coherent sheaves},
  Proceedings of the {I}nternational {C}ongress of {M}athematicians, {V}ol.
  {II} ({B}eijing, 2002) (Beijing), Higher Ed. Press, 2002, pp.~47--56.

\bibitem{bridgelandmaciocia}
Tom Bridgeland and Antony Maciocia, \emph{Complex surfaces with equivalent
  derived categories}, Math. Z. \textbf{236} (2001), no.~4, 677--697.

\bibitem{dolgachevcantat}
Serge Cantat and Igor Dolgachev, \emph{Rational surfaces with a large group of
  automorphisms}, J. Amer. Math. Soc. \textbf{25} (2012), no.~3, 863--905.

\bibitem{dolgachev}
Igor Dolgachev and David Ortland, \emph{Point sets in projective spaces and
  theta functions}, Ast\'erisque (1988), no.~165, 210 pp. (1989).

\bibitem{favero}
David Favero, \emph{Reconstruction and finiteness results for {F}ourier-{M}ukai
  partners}, Adv. Math. \textbf{230} (2012), no.~4-6, 1955--1971.

\bibitem{huybrechts}
D.~Huybrechts, \emph{Fourier-{M}ukai transforms in algebraic geometry}, Oxford
  Mathematical Monographs, The Clarendon Press Oxford University Press, Oxford,
  2006.

\bibitem{kawamataequiv}
Yujiro Kawamata, \emph{{$D$}-equivalence and {$K$}-equivalence}, J.
  Differential Geom. \textbf{61} (2002), no.~1, 147--171.

\bibitem{kawamatatoricii}
\bysame, \emph{Derived categories of toric varieties {II}}, Michigan Math. J.
  \textbf{62} (2013), no.~2, 353--363.

\bibitem{lu2}
Antonio Laface and Luca Ugaglia, \emph{Elementary {$(-1)$}-curves of {$\Bbb
  P^3$}}, Comm. Algebra \textbf{35} (2007), no.~1, 313--324.

\bibitem{mcmullen}
Curtis~T. McMullen, \emph{Dynamics on blowups of the projective plane}, Publ.
  Math. Inst. Hautes \'Etudes Sci. (2007), no.~105, 49--89.

\bibitem{mukai}
Shigeru Mukai, \emph{Counterexample to {H}ilbert's fourteenth problem for the
  3-dimensional additive group}, RIMS Kyoto preprint \textbf{1343} (2001).

\bibitem{orlovabelian}
D.~O. Orlov, \emph{Derived categories of coherent sheaves on abelian varieties
  and equivalences between them}, Izv. Ross. Akad. Nauk Ser. Mat. \textbf{66}
  (2002), no.~3, 131--158.

\bibitem{perronizhang}
Fabio Perroni and De-Qi Zhang, \emph{Pseudo-automorphisms of positive entropy
  on the blowups of products of projective spaces}, pre-print (2011).

\bibitem{prendergast}
Arthur Prendergast-Smith, \emph{The cone conjecture for some rational elliptic
  threefolds}, Math. Z. \textbf{272} (2012), no.~1-2, 589--605.

\bibitem{rouquier}
Rapha{\"e}l Rouquier, \emph{Cat\'egories d\'eriv\'ees et g\'eom\'etrie
  birationnelle (d'apr\`es {B}ondal, {O}rlov, {B}ridgeland, {K}awamata et
  al.)}, Ast\'erisque (2006), no.~307, Exp. No. 946, viii, 283--307,
  S{\'e}minaire Bourbaki. Vol. 2004/2005.

\bibitem{totaro}
Burt Totaro, \emph{The cone conjecture for {C}alabi-{Y}au pairs in dimension
  2}, Duke Math. J. \textbf{154} (2010), no.~2, 241--263.

\end{thebibliography}

\end{document}